\newtheorem{theorem}{Theorem}
\newtheorem{proposition}[theorem]{\sf Proposition}
\newtheorem{lemma}[theorem]{\sf Lemma}
\newtheorem{definition}[theorem]{\sf Definition}
\newtheorem{corollary}[theorem]{\sf Corollary}
\newtheorem{remark}[theorem]{\sf Remark}
\def \Z{\mathbb Z}
\def \O{\mathcal O}
\def \F{\mathcal F}
\def \T{\mathcal T}
\def \N{\mathcal N}
\def \ra{\rightarrow}
\let\olddefinition\definition
\renewcommand{\definition}{\olddefinition\normalfont}
\let\oldremark\remark
\renewcommand{\remark}{\oldremark\normalfont}
\begin{document}
\author{Tanya Kaushal Srivastava}
\title[Lifting Automorphisms as Autoequivalences]{Lifting Automorphisms on Abelian Varieties as Derived Autoequivalences}
\address{Institute of Science and Technology, 3400 Klosterneuburg, Am Campus 1, Austria}
\email{tanyakaushal.srivastava@ist.ac.at}
\date{\today}
\thanks{I would like to thank Piotr Achinger, Daniel Huybrechts, Katrina Honigs, Marcin Lara and Maciek Zdanowicz for the mathematical discussions and Tamas Hausel for hosting me in his research group at IST Austria. This reserach has been funded by IST plus fellowship.}

\maketitle

\tableofcontents

\begin{abstract}
We show that on an Abelian variety over an algebraically closed field of positive characteristic, the obstruction to lifting an automorphism to an Abelian variety over a field of characteristic zero as a morphism vanishes if and only if it vanishes for lifting it as an autoequivalence. We also compare the deformation space of these two types of deformations.    
\end{abstract}

\maketitle

\smallskip
\noindent \textbf{Keywords.} Abelian Varieties, Automorphisms, Positive characteristic, Derived autoequivalences, Lifting to characteristic zero.\\
\noindent \textbf{2010 Mathematics Subject Classifcation:} 14F05, 14J50, 14G17, 11G10, 14D15, 14K99.

\section*{Introduction}

The main idea on which the result below is based is that when we lift an automorphism of $X$ as an autoequivalence, we basically lift, as a coherent sheaf, the structure sheaf of the graph of the automorphism over the product  $X \times X$, now the support of this lifted sheaf will give a deformation of the graph of the automorphism and since the deformation is flat, it has to come from a lift of the automorphism itself.     

The results  and arguments presented here maybe be well known to experts, but since we did not find them written down in literature, we note them down here.  

We break the paper into 2 sections. In section 1 we recall the basics of Abelian varieties that we would need in for understanding their deformation theory and derived autoequivalences. We compute the necessary Hodge groups and deformation spaces. Moreover, we observe that p-rank of an Abelian variety is a derived invariant. Thus, every Abelian variety derived equivalent to an ordinary Abelian variety is ordinary.  

In section 2, we use the deformation-obstruction sequence for the corresponding deformation functors as morphism, or as autoequivalence and compare the obstruction and deformation space dimensions. 

The main result (Proposition \ref{main2}, Theorem \ref{main1}) can be stated as follows:

\begin{theorem}
An automorphism of an Abelian variety lifts as a morphism if and only if it lifts as a derived autoequivalence. Moreover, in case it lifts, there are extra lifts of the automorphism as an autoequivalence given by composition of the lifted automorphism with twist by the lift of the structure sheaf of the graph.
\end{theorem}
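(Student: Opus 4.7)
The plan is to reduce everything to a statement about Fourier--Mukai kernels. Every derived autoequivalence of an abelian variety is a Fourier--Mukai transform, and the autoequivalence $f_*$ induced by an automorphism $f\colon X\to X$ has kernel $\O_{\Gamma_f}$, the structure sheaf of the graph. Lifting $f_*$ as an autoequivalence over a small extension $R\twoheadrightarrow k$ therefore amounts to lifting $\O_{\Gamma_f}$ as a coherent sheaf on $X_R\times X_R$; this problem is controlled by $\mathrm{Ext}^1_{X\times X}(\O_{\Gamma_f},\O_{\Gamma_f})$, while lifting $f$ as a morphism is controlled by $H^0(X,T_X)$.

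The ``only if'' direction is immediate: given a lift $\tilde f\colon X_R\to X_R$, the sheaf $\O_{\Gamma_{\tilde f}}$ is flat over $R$ and restricts to $\O_{\Gamma_f}$, and the Fourier--Mukai transform with this kernel is an autoequivalence of $D^b(X_R)$ extending $f_*$. The ``if'' direction is the heart of the proof: starting from a flat coherent deformation $\mathcal K$ of $\O_{\Gamma_f}$ on $X_R\times X_R$, I would take its scheme-theoretic support $\tilde\Gamma$. Flatness of $\mathcal K$ gives that $\tilde\Gamma$ is flat over $R$ with special fibre $\Gamma_f$, and the crucial step is to verify that the first projection $p_1\colon\tilde\Gamma\to X_R$ remains an isomorphism. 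Since $p_{1*}\O_{\Gamma_f}=\O_X$ with vanishing higher direct images, cohomology and base change yield $p_{1*}\O_{\tilde\Gamma}=\O_{X_R}$; combined with the flatness of $\tilde\Gamma$ over $R$ and the fact that $p_1|_{\Gamma_f}$ is an isomorphism, this forces $p_1|_{\tilde\Gamma}$ itself to be an isomorphism. Setting $\tilde f := p_2\circ (p_1|_{\tilde\Gamma})^{-1}$ then produces the desired morphism lift.

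For the extra-lifts clause I would analyse $\mathrm{Ext}^1(\O_{\Gamma_f},\O_{\Gamma_f})$ via the local-to-global spectral sequence to obtain the short exact sequence
\[
0 \to H^1(\Gamma_f,\O_{\Gamma_f}) \to \mathrm{Ext}^1_{X\times X}(\O_{\Gamma_f},\O_{\Gamma_f}) \to H^0(\Gamma_f,\N_{\Gamma_f/X\times X}) \to 0.
\]
The quotient on the right captures first-order deformations of the subscheme $\Gamma_f$ and, by the argument above, corresponds precisely to lifts of $f$ as a morphism. The kernel $H^1(\Gamma_f,\O_{\Gamma_f})\cong H^1(X,\O_X)$ parametrises deformations of $\O_{\Gamma_f}$ with fixed support, i.e.\ line bundles on $\tilde\Gamma$ reducing to the trivial bundle; via the projection formula the Fourier--Mukai transform with such a twisted kernel is readily identified with $\tilde f_*$ composed with tensoring by a line bundle in $\mathrm{Pic}^0(X_R)$, giving the claimed extra autoequivalence lifts.

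The main technical obstacle is the isomorphism step above: showing that flatness of $\mathcal K$ forces the support projection to stay an isomorphism, rather than merely proper or finite. Once that is settled, the analogous short exact sequence for $\mathrm{Ext}^2$ compares the obstruction classes on both sides, so the two obstructions vanish simultaneously and the obstruction-theoretic statement advertised in the abstract follows as well.
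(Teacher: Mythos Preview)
Your approach is essentially the paper's: reduce to lifting $\O_{\Gamma_\sigma}$ as a coherent sheaf on the product, recover the morphism from the support of the lift (the paper outsources this step to \cite[Lemma~3.5]{tsrivast-1} while you sketch the finiteness-plus-Nakayama argument directly), and read off the extra lifts from the $H^1(\O_A)$ piece of $\mathrm{Ext}^1$. The only cosmetic difference is that your exact sequence, coming from the local-to-global spectral sequence, has $H^1(\O)$ as the subobject and $H^0(\N)$ as the quotient, whereas the paper's long exact sequence (Proposition~\ref{HartshorneEx19.1}) is filtered the other way; both orderings give the same dimension count once one invokes HKR.
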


\textbf{Notation:} For an algebraically closed field $k$ of positive characteristic, we will denote by $W(k)$ the ring of Witt vectors over $k$, sometimes also abbreviated as $W$. $K$ will denote a characteristic zero field.

\section{Abelian Varieties}

In this section we discuss basic properties of Abelian varieties paying a special attention to elliptic curves, for which we prove result before the general case. 

\begin{definition}[\cite{MumfordAV}]
An \emph{Abelian variety} $X$ is a complete algebraic variety over an algebraically closed field, $k$, with a group law $m: X \times X \ra X$ such that $m$ and the inverse map are both isomorphisms of varieties. 
\end{definition}

\subsubsection{Hodge Groups  for Abelian Varieties}

For $A$ a $g$-dimensional Abelian variety over $k$, an algebraically closed field, the Hodge groups $H^q(A, \Omega_A^p)$ are canonically isomorphic to $\wedge^p[H^0(A, \Omega_A^1)] \otimes \wedge^q[H^1(A,\O_A)]$, and dim $H^1(A, \O_A) =$ dim $H^0(A, \Omega_A) = g$, see \cite{MumfordAV}.   

\subsubsection{Deformation theory of Abelian Varieties}

\begin{theorem}[Norman-Oort, \cite{NO}]
Every Abelian variety defined over an algebraically closed field of positive characteristic lifts to characteristic zero. 
\end{theorem}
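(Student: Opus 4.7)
The plan is to reduce to the polarized case and then run the standard deformation-plus-algebraization argument, so the proof decomposes into three natural steps. First I would note that every Abelian variety $A$ over $k$ admits a polarization $\lambda : A \to A^\vee$, obtained from any ample line bundle by the usual construction (symmetrizing and taking $\phi_L$). Hence it suffices to lift the polarized pair $(A, \lambda)$ from $k$ to $W(k)$, which is both stronger and more tractable.

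Next I would set up the deformation theory of $(A, \lambda)$ over $W(k)$. Infinitesimal deformations of $A$ alone are classified by $H^1(A, \T_A)$ with obstructions in $H^2(A, \T_A)$; using $\T_A \cong \O_A^{\oplus g}$ and the Hodge-group computation recalled above, the obstruction space has the right shape so that requiring $\lambda$ to lift at each order cuts out the "symmetric" subspace of dimension $g(g+1)/2$, and a direct obstruction calculation shows that the polarized deformation functor is formally smooth over $W(k)$. Consequently, one obtains a compatible system of polarized lifts $(A_n, \lambda_n)$ over $W_n(k)$ for every $n$, assembling into a formal polarized Abelian scheme $(\widehat{\mathcal{A}}, \widehat{\lambda})$ over $\mathrm{Spf}\, W(k)$.

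The third step is algebraization. Because $\lambda_n$ provides a compatible system of relatively ample line bundles, Grothendieck's existence theorem applies to give a projective scheme $\mathcal{A}$ over $\mathrm{Spec}\, W(k)$ whose $p$-adic completion recovers $\widehat{\mathcal{A}}$. The group law on $A$ lifts inductively by the same obstruction-vanishing argument (the obstructions to lifting multiplication and inversion live in $H^2$ groups that vanish in the polarized setting), so $\mathcal{A}$ is an Abelian scheme over $W(k)$. Its generic fiber $\mathcal{A}_K$, where $K = \mathrm{Frac}(W(k))$ has characteristic zero, is then the required lift.

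The main obstacle is that without carrying the polarization along one cannot algebraize: a formal Abelian scheme over $\mathrm{Spf}\, W(k)$ need not come from a proper $W(k)$-scheme, so the whole argument hinges on showing that the polarization lifts to every infinitesimal order. This is precisely where the symmetry of the obstruction class with respect to $\lambda$, and hence the smoothness of the polarized deformation functor, does the essential work; once this is established, Grothendieck existence and standard obstruction-vanishing reduce the remainder to bookkeeping.
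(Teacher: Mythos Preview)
The paper does not prove this theorem; it merely states it and cites Norman--Oort \cite{NO}. So there is no ``paper's own proof'' to compare against. What the paper does contain, however, is the remark immediately following the statement that ``not every Abelian variety would admit a lift over the Witt ring but one would need a ramified extension of the Witt ring to get a lift''. Your proposal concludes with a lift over $W(k)$ itself, so your argument, if correct, would contradict what the paper explicitly records as a known subtlety.

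The genuine gap is the sentence ``a direct obstruction calculation shows that the polarized deformation functor is formally smooth over $W(k)$''. This is false in general. Formal smoothness of the polarized deformation functor over $W(k)$ holds when the polarization is principal (or more generally separable, i.e.\ of degree prime to $p$), but an arbitrary Abelian variety over $k$ need not carry such a polarization, and for inseparable polarizations the local moduli space can be singular. This is exactly the obstacle Norman and Oort had to overcome: their argument is not a naive obstruction computation but goes through the geometry of the moduli space (showing, roughly, that ordinary points are dense so that one can first deform equicharacteristically to an ordinary Abelian variety and then invoke Serre--Tate canonical lifts, possibly after passing to a ramified extension of $W(k)$). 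Your three-step outline is the right shape for the \emph{separably} polarized case, but as written it asserts unobstructedness precisely where the real difficulty lies, and the $W(k)$ conclusion is too strong.

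A minor point: once a projective flat lift $\mathcal{A}/W(k)$ with a section is obtained, the group law extends automatically by rigidity of Abelian schemes; you do not need a separate obstruction argument for multiplication and inversion.
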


The formal deformation space of the lifting of an Abelian variety is given by $H^1(A, T_A)$, see \cite[Proposition 6.15]{GIT}. This has dimension $g^2$. Since all formal deformations need not be algebraic, the dimension of the algebraic deformations will be smaller for $g \geq 2$.  As in the case of elliptic curves, note that the algebraization of the lift is automatic as the obstruction to lifting an (ample) line bundle lie in the second cohomology group, which vanishes as we are in dimension 1. Thus we can use Grothendieck's existence theorem to conclude.

To construct algebraic lifts of Abelian varieties, we lift a polarization on the base Abelian variety along with it (for a reason why we do this, see remark \ref{subfunctor}) and then use Grothendieck's Existence theorem to conclude. The dimension of the local moduli space of algebraic (polarized) lifts is $g(g+1)/2$, see \cite{OortFG} for details.  We remark that even though every Abelian variety can be lifted to characteristic 0, this is true only in the weak sense, i.e, not every Abelian variety would admit a lift over the Witt ring but one would need a ramified extension of the Witt ring to get a lift, see \cite[Section 13]{OortAVe}. However, ordinary Abelian varieties admit a lift over the Witt ring.  

\subsubsection{The Automorphism Group of Abelian Varieties and Deformations}
We refer to \cite[Chapter IV]{MumfordAV}, for the structure of automorphism group (or more generally endomorphisms) of Abelian varieties. For the explicit description of the group of automorphisms of an elliptic curve see Silverman, \cite[Chapter III, Theorem 10.1]{SilvermanAEC}.  

There exist non-liftable automorphisms for Abelian varieties. Thus , the full automorphism group of an Abelian variety need not lift. For an example  see \cite[Section 14]{OortAVe}. The obstruction and deformation space of a particular automorphism have been computed below in section \ref{lifting}.

\subsection{Derived Equivalent Abelian Varieties}

Let $A$ be an Abelian variety and $\hat{A}$ the dual Abelian variety, we will first recall the definition of the group, $U(A \times \hat{A})$, of isometric automorphisms of $A \times \hat{A}$. 
Note that any morphism $f: A \times \hat{A} \ra A \times \hat{A}$ can be written as a matrix 
$$
\begin{pmatrix} 
a & b \\
c & d 
\end{pmatrix},
$$
where the morphism $a$ maps $A$ to $A$, $b$ maps $\hat{A}$ to $A$ and so on. Each morphism $f$ determines two other morphisms $\hat{f}$ and $\tilde{f}$ from $A \times \hat{A}$ to $A \times \hat{A}$ whose matrices are
$$
\hat{f} = \begin{pmatrix} 
\hat{d} & \hat{b} \\
\hat{c} & \hat{a} 
\end{pmatrix},
$$
and 
$$
\tilde{f} = \begin{pmatrix} 
\hat{d} & - \hat{b} \\
-\hat{c} & \hat{a} 
\end{pmatrix}.
$$

We define $U(A \times \hat{A})$ as the subset of all automorphisms of $A \times \hat{A}$ such that $f^{-1} = \tilde{f}$ and such an automorphism is called {\it isometric}. We can extend this definition to isometric isomorphism of two Abelian varieties $A$ and $B$ and denote the group as $U(A \times \hat{A}, B \times \hat{B})$. The following theorem of Orlov \cite{OrlovAV} and Polishchuk \cite{Pol}, characterizes derived equivalences of two Abelian varieties using isometric isomorphisms.

\begin{theorem} [\cite{HuyFM}, Corollary 9.49] \label{derivedAV}
Two Abelian varieties $A$ and $B$ over an algebraically closed field $k$ define equivalent derived categories  $D^b(A)$ and $D^b(B)$ if and only if there exists an isomorphism $f: A \times \hat{A} \xrightarrow B \times \hat{B}$ with $\tilde{f} = f^{-1}$: 
$$
D^b(A) \cong D^b(B) \iff U(A \times \hat{A}, B \times \hat{B}). 
$$  
\end{theorem}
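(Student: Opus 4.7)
The approach is to invoke Orlov's representability theorem so that any derived equivalence $\Phi: D^b(A) \to D^b(B)$ is realized as a Fourier--Mukai transform $\Phi_{\mathcal{E}}$ with kernel $\mathcal{E} \in D^b(A \times B)$. The structural input special to abelian varieties is that the group $A \times \hat{A}$ acts on $D^b(A)$ via autoequivalences $\Psi_{(a,L)} := T_a^{\ast}(-) \otimes L$, and my plan is to extract $f$ from the way $\Phi$ conjugates this action.

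For the forward direction, I would first verify that for each $(a,L) \in A \times \hat{A}$ the conjugate $\Phi \circ \Psi_{(a,L)} \circ \Phi^{-1}$ is again of the form $\Psi_{(b,M)}$ for a unique $(b,M) \in B \times \hat{B}$. Testing on skyscraper sheaves (which $\Psi_{(a,L)}$ sends to skyscrapers) pins down $b$, while testing on $\mathrm{Pic}^0(A)$ (on which $\Psi_{(a,L)}$ acts by a character) pins down $M$. The assignment $(a,L) \mapsto (b,M)$ then defines a group homomorphism $f$, and applying the same procedure to $\Phi^{-1}$ gives its inverse, so $f$ is an isomorphism of abelian varieties.

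The isometric equation $\tilde{f} = f^{-1}$ is then forced by Serre duality: abelian varieties have trivial canonical bundle, so $\Phi$ commutes with the Serre functor up to a shift, which implies that $\Phi$ preserves the Mukai pairing on numerical Grothendieck groups (or, equivalently, the canonical skew-symmetric pairing on $A \times \hat A$ induced by the Poincar\'e bundle). Unpacking this compatibility through the matrix entries of $f$ yields exactly the four identities encoded by $\tilde f \circ f = \mathrm{id}$; in matrix form these are precisely $\hat d a - \hat b c = \mathrm{id}_A$ together with the symmetry conditions on $\hat c a$ and $\hat b d$.

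For the converse, I would build a kernel out of $f$ directly. By a normal-form argument, any isometric isomorphism factors through elementary isometries: translations by a point, twists by a symmetric line bundle, and the flip $(a,\alpha) \mapsto (\alpha,-a)$. Each factor lifts to a standard Fourier--Mukai kernel --- the structure sheaf of a translation graph, a shifted line bundle on the graph of a polarization, and the Poincar\'e bundle on $A \times \hat A$, respectively --- and composing these kernels gives the sought equivalence $D^b(A) \cong D^b(B)$. The main obstacle is the forward direction's assertion that conjugation by $\Phi$ preserves the shape $\Psi_{(b,M)}$: a priori the conjugate is an arbitrary autoequivalence, and ruling out more exotic possibilities rests on the Mukai--Orlov structure theorem saying that the kernel of a derived equivalence between abelian varieties is forced to be a shifted semi-homogeneous sheaf whose support is the graph of a homomorphism --- which is precisely the geometric content of the matrix $f$.
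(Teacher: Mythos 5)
The paper does not actually prove this statement: it is quoted from \cite{HuyFM}, Corollary 9.49, as a theorem of Orlov and Polishchuk, so there is no internal proof to compare against, and your sketch is essentially the standard argument from that source (conjugate the action of $A\times\hat A$ by $\Psi_{(a,L)}=t_{a*}(-)\otimes L$ through $\Phi$ to produce $f_\Phi$, verify the isometry condition, and realize the converse by lifting generators of $U$ to standard kernels). Two points in your outline need repair before it is a proof. First, your justification of the crux --- that $\Phi\circ\Psi_{(a,L)}\circ\Phi^{-1}$ is again of the form $\Psi_{(b,M)}$ --- rests on the claim that the kernel of an equivalence of abelian varieties is a shifted semi-homogeneous sheaf \emph{whose support is the graph of a homomorphism}. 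The second half of that claim is false: the Poincar\'e bundle, the kernel of the basic equivalence $D^b(A)\cong D^b(\hat A)$, has full support on $A\times\hat A$. What Orlov actually proves is that the locus $\Sigma(\mathcal{E})$ of tuples $(a,\alpha,b,\beta)$ for which $t_{(a,b)}^*\mathcal{E}\otimes(L_\alpha\boxtimes L_\beta)\simeq\mathcal{E}$ (with appropriate sign conventions) is an abelian subvariety of $(A\times\hat A)\times(B\times\hat B)$ of dimension $2g$ projecting isomorphically to each factor; it is this stabilizer subvariety, not the support of $\mathcal{E}$, that is the graph of $f_\Phi$, and establishing that both projections are isomorphisms is where the semi-homogeneity of the kernel is used. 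Second, your converse via a ``normal form'' for isometric isomorphisms presupposes a generation statement for $U(A\times\hat A,B\times\hat B)$ by elementary isometries; that statement is nontrivial and constitutes most of the work in \cite{Pol}, so it must be cited or proved rather than asserted. With those two steps supplied, your outline coincides with the proof of the result the paper is citing.
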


In the special case of elliptic curves

\begin{theorem}[\cite{AKW} Theorem 2.2]
Let $E$ and $F$ be two elliptic curves over a field $k$ such that $\Phi: D^b(E) \cong D^b(F)$ as a k-linear triangulated categories, then there is an isomorphism of $k$-schemes $E \cong F$.
\end{theorem}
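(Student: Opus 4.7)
The plan is to combine Theorem \ref{derivedAV} with the fact that elliptic curves are canonically principally polarized. The equivalence $D^b(E) \cong D^b(F)$ yields, by Theorem \ref{derivedAV}, an isometric isomorphism
$$
f \colon E \times \hat{E} \xrightarrow{\sim} F \times \hat{F}
$$
of abelian varieties. Using the canonical principal polarizations to identify $E \cong \hat{E}$ and $F \cong \hat{F}$, one translates $f$ into an isomorphism $E \times E \cong F \times F$ represented by a $2 \times 2$ matrix of morphisms $E \to F$.

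From this, the goal is to extract an isomorphism $E \cong F$. Since $f$ is an isomorphism, at least one matrix entry must be non-zero, hence an isogeny, so $E$ and $F$ are a priori isogenous. To upgrade this to an honest isomorphism, I would unpack the isometric condition $f \tilde{f} = \mathrm{id}$, which gives relations such as $a \hat{d} - b \hat{c} = \mathrm{id}_F$ in $\mathrm{End}(F)$ together with three companion relations from the other entries. These equations are rigid enough to force at least one entry to be an isomorphism, after a case analysis on the standard trichotomy for $\mathrm{End}(E)$ (namely $\Z$, an order in an imaginary quadratic field, or a quaternion order in the supersingular case in positive characteristic). Alternatively, one may invoke a cancellation principle: for elliptic curves over an algebraically closed field, $E \times E \cong F \times F$ implies $E \cong F$, proved by a Krull--Schmidt argument on abelian varieties together with the same trichotomy.

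The main obstacle is precisely this upgrade step, which becomes delicate in the supersingular case. A cleaner conceptual route would sidestep it as follows: a derived equivalence $\Phi \colon D^b(E) \to D^b(F)$ is a Fourier--Mukai transform with kernel in $D^b(E \times F)$, and by Atiyah's classification of indecomposable semistable sheaves on an elliptic curve, autoequivalences of $D^b(F)$ act on the numerical Grothendieck group $N(F) \cong \Z^2$ through $\mathrm{SL}(2, \Z)$, allowing one to move any semistable object to a skyscraper sheaf. Composing $\Phi$ with a suitable autoequivalence, one arranges that $\Phi$ sends skyscrapers $k(x)$ to skyscrapers on $F$, and then a standard kernel-support analysis forces the Fourier--Mukai kernel to be concentrated on the graph of an isomorphism $E \cong F$ (up to a line bundle twist and a shift), yielding the result.
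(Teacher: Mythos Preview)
The paper does not actually prove this statement; it is quoted from \cite{AKW} with no argument, followed only by a remark. So there is no proof in the paper to compare your proposal against, and I can only comment on the proposal itself.

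Your first route has two real gaps. First, Theorem~\ref{derivedAV} is stated in the paper only over an algebraically closed field, and the remark immediately following the statement you are proving says explicitly that the extension of the Orlov--Polishchuk criterion to arbitrary base fields ``has not been proved yet''; since the target is over an arbitrary field $k$, invoking Theorem~\ref{derivedAV} here is at best circular. Second, and more seriously, the cancellation principle you propose is \emph{false}: by a theorem of Shioda, any two supersingular elliptic curves $E,F$ over $\overline{\mathbb{F}}_p$ satisfy $E\times E\cong F\times F$, while there are in general several isomorphism classes of such curves. So ``$E\times E\cong F\times F\Rightarrow E\cong F$'' breaks exactly in the supersingular case you flagged as delicate, and the isometry condition $f^{-1}=\tilde f$ must carry the entire weight of the argument rather than be an optional refinement. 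You do not indicate how to run that case analysis to completion.

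Your second route---compose $\Phi$ with autoequivalences of $D^b(F)$ acting on the numerical lattice through $\mathrm{SL}(2,\Z)$ until skyscrapers are sent to skyscrapers, then read off an isomorphism from the support of the kernel---is the standard argument and is essentially what \cite{AKW} does. Over a non-closed field you should not invoke Atiyah's classification directly; one instead works with slope and the Harder--Narasimhan filtration, and checks that Fourier--Mukai representability of $\Phi$ is available over $k$. With those adjustments this approach goes through.
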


\begin{remark}
Note that the above statement for elliptic curves is valid even over non-algebraically closed fields, we refer the reader to \cite[Corollary 7.4.5]{Liu} for comparison between genus 1 curves and elliptic curves over non-algebraically closed fields. The extension for Abelian Varieties over any field is also expected to hold, but has not been proved yet.   
\end{remark}

Recall that the $p$-rank of an Abelian variety over an algebraically closed field $k$ is the rank $i$ of the $p$-torsion group $A[p] \cong (\Z/p\Z)^i$, where $i \in [0, dim(X)] $, if char $k$ = p otherwise $i = 2g$.    

\begin{corollary}
The $p$-rank of an Abelian variety is a derived invariant. 
\end{corollary}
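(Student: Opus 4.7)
The plan is to leverage Theorem~\ref{derivedAV} of Orlov--Polishchuk, which asserts that $D^b(A) \cong D^b(B)$ is equivalent to the existence of an isometric isomorphism $A \times \hat{A} \xrightarrow{\sim} B \times \hat{B}$. In particular, forgetting the isometric condition, the two sides are isomorphic as bare abelian varieties, so they have the same $p$-rank. The problem thus reduces to expressing the $p$-rank of $A \times \hat{A}$ in terms of that of $A$ alone.

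For this I would use two elementary properties of the $p$-rank $f(-)$. First, it is additive on products: since $(X \times Y)[p] = X[p] \times Y[p]$ as finite group schemes, passing to $\bar{k}$-points of the étale parts gives $f(X \times Y) = f(X) + f(Y)$. Second, $f(A) = f(\hat{A})$. To see this, pick any polarization $\phi \colon A \to \hat{A}$ (which exists since $A$ is projective); then $\phi$ is an isogeny and induces an isogeny on the associated $p$-divisible groups. The $p$-rank coincides with the height of the étale quotient of $A[p^\infty]$ in its connected--étale sequence, and this quantity is manifestly isogeny-invariant.

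Combining the two observations yields $2 f(A) = f(A \times \hat{A}) = f(B \times \hat{B}) = 2 f(B)$, and hence $f(A) = f(B)$. The corollary is therefore a short formal consequence once Theorem~\ref{derivedAV} is granted; the only non-trivial ingredient beyond the orchestration is the duality invariance $f(A) = f(\hat{A})$, which may alternatively be deduced from Cartier duality between $A[p]$ and $\hat{A}[p]$ together with the fact that, for an abelian variety, the numbers of $\Z/p\Z$- and $\mu_p$-type summands in $A[p]$ coincide. No serious obstacle is expected: the entire difficulty has been absorbed into the cited Orlov--Polishchuk theorem.
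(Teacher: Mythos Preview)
Your proposal is correct and follows essentially the same route as the paper: invoke Theorem~\ref{derivedAV} to obtain an isomorphism $A \times \hat{A} \cong B \times \hat{B}$, then use that $A$ and $\hat{A}$ have the same $p$-rank to conclude. The paper is terser---it simply asserts $\hat{A}[p] = A[p]$ and that isomorphisms preserve $p$-torsion---whereas you spell out the additivity on products and justify $f(A)=f(\hat{A})$ via a polarization isogeny (or Cartier duality), but the underlying argument is the same.
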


\begin{proof}
This follows from the easy observation that $\hat{A}[p] = A[p]$ and then using the theorem above as isomorphisms preserve p-torsion.  
\end{proof}

This also follows from the result of Honigs \cite{Hon} that derived equivalent Abelian varieties are isogenous. 

\subsubsection{Derived Autoequivalence Group of Abelian Varieties}

The derived autoequivalence group satisfies the following short exact sequence \cite[Proposition 9.55]{HuyFM}:

\begin{equation}
0 \ra \Z \oplus (A \times \hat{A}) \ra Aut (D^b(A)) \ra U(A \times \hat{A}) \ra 0 
\end{equation}

Here, $A$ is an Abelian variety, $U(A \times \hat{A})$ is the group of isometric automorphisms. Explicitly, the kernel is generated by shifts $[n]$, translations $t_{a*}$, and tensor products $L \otimes ()$ with $L \in Pic^0(A)$.

\section{Lifting automorphisms as autoequivalences} \label{lifting}

Let $E$ be an elliptic curve with j-invariant not equal to $0$ or $1728$ over $k$ a field of characteristic $p > 0$. Let $\sigma: E \ra E$ be an automorphism of $E$. This automorphism induces a Fourier-Mukai equivalence on the derived categories given by the Fourier-Mukai kernel $\O_{\Gamma(\sigma)}$,  where  $\O_{\Gamma(\sigma)}$ is the push forward of the structure sheaf of the graph of $\sigma$ to $X \times X$ and is considered as a coherent sheaf in $D^b(E \times E)$:
$$
\Phi_{\O_{\Gamma(\sigma)}}: D^b(E) \xrightarrow{\cong} D^b(E).  
$$
This reinterpretation of an automorphism as an perfect complex in the derived category provides us with another way of deforming them as a perfect complex, rather than just as deformation as a morphism. However, since we started with deforming a coherent sheaf, the deformations of it as a perfect complex will still be coherent sheaves, see \cite[remark after proof of Theorem 3.4]{tsrivast-1}. More precisely, let $R$ be an Artinian local $W(k)$-algebra with residue field $k$, $E_R$ be an infinitesimal deformation of $E$ over $R$ and consider the following two deformation functors: first is the \textbf{deformation functor of an automorphism as a morphism} 
\begin{equation}
\begin{split}
F_{aut}: & (\text{Artin local $W(k)$-algebras with residue field $k$})  \rightarrow (Sets) \\
& R \mapsto \{\text{Lifts of automorphism $\sigma$ to $R$, i.e., pairs} (E_R, \sigma_R) \},
\end{split}
\end{equation}
where by lifting of automorphism $\sigma$ over $R$ we mean that there exists an infinitesimal deformation $E_R$ of $E$ and an automorphism $\sigma_R: E_R \rightarrow E_R$ which reduces to $\sigma$, i.e., we have the following commutative diagram: 
$$
\xymatrix{
E_R  \ar[r]^{\sigma_R} &E_R\\
E \ar[u] \ar[r]^{\sigma} & E. \ar[u]}
$$
The second one is the \textbf{deformation functor of an automorphism as a coherent sheaf}, defined as follows:
\begin{equation}
\begin{split}
F_{coh}: & (\text{Artin local $W(k)$-algebras with residue field $k$})  \rightarrow (Sets) \\
& R \mapsto \{\text{Deformations of $\mathcal{O}_{\Gamma(\sigma)}$ to $R$} \}/ iso,
\end{split}
\end{equation}
where by deformations of $\mathcal{O}_{\Gamma(\sigma)}$ to $R$ we mean that there exists an infinitesimal deformation $Y_R$ of $Y := E \times E $ over $R$ and a coherent sheaf $\mathcal{F}_R$, which is a deformation of the coherent sheaf $\mathcal{O}_{\Gamma(\sigma)}$ and $\mathcal{O}_{\Gamma(\sigma)}$ is considered as a coherent sheaf on $E \times E$ via the closed embedding $\Gamma(\sigma) \hookrightarrow E \times E$. Isomorphisms are defined in the obvious way. 

Observe that there is a natural transformation $\eta: F_{aut} \rightarrow F_{coh}$ given by 
\begin{equation}
\begin{split}
\eta_R :  F_{aut}(R) & \longrightarrow F_{coh}(R)\\
 (\sigma_R: E_R \rightarrow E_R) & \mapsto \mathcal{O}_{\Gamma(\sigma_R)} / E_R \times E_R.
\end{split}
\end{equation}

There is a deformation-obstruction long exact sequence connecting the two functors.
\begin{proposition}[\cite{tsrivast-1}Proposition 3.6] \label{HartshorneEx19.1}
Let $i: X \hookrightarrow Y$ be a closed embedding with $X$ integral and projective scheme of finite type over $k$. Then there exists a long exact sequence 
\begin{equation} \label{les}
\begin{split}
0 \rightarrow H^0(\mathcal{N}_X) \rightarrow &\text{Ext}^1_Y(\O_X, \O_X) \rightarrow H^1(\O_X) \rightarrow  \\
&H^1(\mathcal{N}_X) \ra \text{Ext}^2_Y(\O_X, \O_X) \ra \ldots,
\end{split}
\end{equation}
where $\mathcal{N}_X$ is the normal bundle of $X$. 
\end{proposition}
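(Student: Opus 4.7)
My plan is to derive this sequence from the local-to-global Ext spectral sequence
$$E_2^{p,q} = H^p\bigl(Y, \mathcal{E}xt^q_{\O_Y}(i_*\O_X, i_*\O_X)\bigr) \;\Longrightarrow\; \text{Ext}^{p+q}_Y(i_*\O_X, i_*\O_X),$$
after identifying the first two local $\mathcal{E}xt$-sheaves. Directly, $\mathcal{H}om_{\O_Y}(\O_X, \O_X)\cong\O_X$ since any $\O_Y$-linear self-map of $\O_Y/\mathcal{I}_X$ is multiplication by an element of $\O_X$. For $\mathcal{E}xt^1$, I would apply $\mathcal{H}om_{\O_Y}(-,\O_X)$ to the ideal sequence $0\to\mathcal{I}_X\to\O_Y\to\O_X\to 0$ and read off $\mathcal{E}xt^1_{\O_Y}(\O_X,\O_X) \cong \mathcal{H}om_{\O_X}(\mathcal{I}_X/\mathcal{I}_X^2,\O_X) = \N_X$; this tacitly uses that $X$ is (locally) a regular embedding in $Y$, which is automatic in the intended application, where $X$ is a smooth divisor in a smooth surface.

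With $\mathcal{E}xt^0=\O_X$ and $\mathcal{E}xt^1=\N_X$ only two rows of the $E_2$-page contribute in the relevant range, so the spectral sequence collapses into a long exact sequence of the claimed form
$$0 \to H^0(\N_X) \to \text{Ext}^1_Y(\O_X, \O_X) \to H^1(\O_X) \to H^1(\N_X) \to \text{Ext}^2_Y(\O_X, \O_X) \to \ldots.$$
Geometrically, the first edge map is the derivative of the natural transformation $F_{aut}\to F_{coh}$, sending a first-order deformation of $X$ as a subscheme of $Y$ to the corresponding deformation of $\O_X$ as an $\O_Y$-module; the term $H^1(\O_X)$ records the first-order line-bundle twists by which two sheaf deformations can differ over a fixed subscheme deformation; and the higher connecting maps carry the obstruction classes. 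Checking that the abstract edge homomorphisms of the spectral sequence coincide with these geometric natural transformations is the key verification.

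The principal obstacle I anticipate is precisely this bookkeeping: pinning down the identifications of the edge maps in geometric terms, and in particular establishing (in the generality stated) that the local calculation $\mathcal{E}xt^1_{\O_Y}(\O_X,\O_X)\cong\N_X$ is compatible with the global interpretation of $H^0(\N_X)$ as the tangent space to $F_{aut}$. Once those compatibilities are fixed, the higher-degree terms of the sequence follow formally from the two-row structure of the spectral sequence.
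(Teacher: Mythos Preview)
The paper does not prove this proposition here; it merely cites \cite{tsrivast-1}. So let me focus on whether your argument actually produces the stated sequence.

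It does not: the local-to-global spectral sequence, even when it has only two nonzero rows $q=0,1$, yields the long exact sequence in the \emph{other} order. The edge map injects $E_2^{1,0}=H^1(\O_X)$ into $\text{Ext}^1_Y(\O_X,\O_X)$, with quotient mapping to $E_2^{0,1}=H^0(\N_X)$, and the connecting differential $d_2$ lands in $E_2^{2,0}=H^2(\O_X)$:
\[
0 \to H^1(\O_X) \to \text{Ext}^1_Y(\O_X,\O_X) \to H^0(\N_X) \xrightarrow{d_2} H^2(\O_X) \to \text{Ext}^2_Y(\O_X,\O_X) \to H^1(\N_X) \to \cdots.
\]
This is a perfectly good exact sequence, but it is not the one in the proposition: there the roles of $H^0(\N_X)$ and $H^1(\O_X)$ are reversed, and the term following $H^1(\O_X)$ is $H^1(\N_X)$, not $H^2(\O_X)$. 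Your geometric description of the first edge map (``derivative of $F_{aut}\to F_{coh}$'') is the map $H^0(\N_X)\to\text{Ext}^1$, but that is \emph{not} the edge map of the spectral sequence; it is the connecting map of a different long exact sequence.

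The sequence as stated comes instead from applying $R\text{Hom}_Y(-,\O_X)$ to the ideal-sheaf sequence $0\to\mathcal I_X\to\O_Y\to\O_X\to 0$. Since $X$ is integral and projective, the map $\text{Hom}_Y(\O_X,\O_X)\to\text{Hom}_Y(\O_Y,\O_X)$ is the identity on $k$, so the long exact sequence begins
\[
0 \to \text{Hom}_Y(\mathcal I_X,\O_X) \to \text{Ext}^1_Y(\O_X,\O_X) \to \text{Ext}^1_Y(\O_Y,\O_X) \to \text{Ext}^1_Y(\mathcal I_X,\O_X) \to \text{Ext}^2_Y(\O_X,\O_X) \to \cdots,
\]
and one then identifies $\text{Ext}^i_Y(\O_Y,\O_X)=H^i(\O_X)$ and $\text{Ext}^i_Y(\mathcal I_X,\O_X)\cong H^i(\N_X)$. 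The last identification is the point where a hypothesis is needed (vanishing of the higher local $\mathcal{E}xt$'s of $\mathcal I_X$), and your remark that a regularity assumption is being used tacitly is well taken --- indeed in the paper's main application the embedding $A\hookrightarrow A\times A$ has codimension $g$, not $1$, so your ``smooth divisor in a smooth surface'' picture and the two-row collapse both fail for $g\ge 2$.
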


\begin{remark}
Note that the obstruction spaces for the functors $F_{aut}$ and $F_{coh}$ are $H^1(\mathcal{N}_X)$ and $\text{Ext}^2_Y(\O_X, \O_X)$ respectively.  See, for example, \cite[Theorem 6.2, Theorem 7.3]{HartshorneDT}. The same results give us the tangent spaces for the functors $F_{aut}$  and $F_{coh}$ and they are  $H^0(\mathcal{N}_X)$ and $\text{Ext}^1_Y(\O_X, \O_X)$.
\end{remark}

\begin{lemma}
The normal bundle of $\Gamma(\sigma): E \ra E \times E$ is trivial.  
\end{lemma}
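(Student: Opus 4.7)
The plan is to use the standard identification of the normal bundle of a graph with the pullback of the tangent bundle of the target. The graph map $\iota_\sigma := (\mathrm{id}_E, \sigma): E \hookrightarrow E \times E$ realizes $\Gamma(\sigma)$ as a smooth closed subvariety isomorphic to $E$. From the tangent sequence
$$
0 \to T_E \xrightarrow{d\iota_\sigma} \iota_\sigma^* T_{E \times E} \to \mathcal{N}_{\Gamma(\sigma)/E \times E} \to 0,
$$
together with the splitting $\iota_\sigma^* T_{E \times E} \cong T_E \oplus \sigma^* T_E$ induced by the two projections of $E \times E$, one computes $d\iota_\sigma(v) = (v, d\sigma(v))$, so projection onto the second summand induces an isomorphism $\mathcal{N}_{\Gamma(\sigma)/E \times E} \cong \sigma^* T_E$.

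I would then conclude by invoking the fact that the tangent bundle of an elliptic curve is trivial, being globally generated by a translation-invariant vector field (this is consistent with the Hodge computation recalled earlier, where $\dim H^0(E, \Omega_E^1) = 1$ and in fact $\Omega_E^1 \cong \mathcal{O}_E$). Hence $\sigma^* T_E \cong \sigma^* \mathcal{O}_E \cong \mathcal{O}_E$, and the normal bundle is trivial. An even shorter route: $\Gamma(\sigma)$ is the image of the diagonal $\Delta_E \subset E \times E$ under the automorphism $(\mathrm{id}_E, \sigma)$ of $E \times E$, so its normal bundle is isomorphic to the normal bundle of the diagonal, which is $T_E \cong \mathcal{O}_E$.

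There is no real obstacle here; once the normal bundle of a graph is identified as the pullback of $T_Y$ and the parallelizability of $E$ is invoked, the statement falls out. I expect the same argument to carry over verbatim to an automorphism of a $g$-dimensional Abelian variety $A$, giving $\mathcal{N}_{\Gamma(\sigma)/A \times A} \cong \sigma^* T_A \cong \mathcal{O}_A^g$, which will presumably be what is needed to feed the computation of $H^0$ and $H^1$ of the normal bundle into the deformation-obstruction sequence from Proposition \ref{HartshorneEx19.1} in the next step.
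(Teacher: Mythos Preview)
Your proof is correct and follows essentially the same route as the paper, which simply cites \cite[Proposition II.8.20]{HartshorneAG} for $r=1$: that proposition supplies exactly the (co)tangent exact sequence you write down, and the conclusion then comes from the triviality of $T_E$. Your argument is more explicit than the paper's one-line citation (and your alternative via the automorphism $(\mathrm{id}_E,\sigma)$ carrying $\Delta_E$ to $\Gamma(\sigma)$ is a clean variant), and your anticipated extension to higher-dimensional abelian varieties matches what the paper does next.
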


\begin{proof}
Use \cite[Proposition II.8.20]{HartshorneAG} for the case r = 1. 
\end{proof}

Thus, we have can compute the obstruction and deformation spaces for $F_{aut}$ and they are $T_{F_{aut}} = H^0(E, \mathcal{N}_E) = H^0(E, \O_E) = k$ and $Obs_{F_{aut}} = H^1(E, \mathcal{N}_E) = H^1(E, \O_E) = k$. 

Next, note that there is a canonical isomorphism $Ext^i_{E \times E}(\O_{\Gamma(\sigma)}, \O_{\Gamma(\sigma)})$ and $Ext^i_{E \times E}(\O_E, \O_E)$ given by $\sigma$, so to compute the obstruction and deformation spaces for $F_{coh}$, we need to compute $Ext^i_{E \times E}(\O_E, \O_E)$. Recall that these groups are just the Hochschild cohomology groups of the elliptic curve $E$ and using Hochschild-Kostant-Rosenberg (HKR) isomorphism \cite[Theorem 1.3]{HKR}, we get that
\begin{eqnarray*}
Ext^1_{E \times E}(\O_E, \O_E) & = H^1(E, \O_E) \oplus H^0(E, \mathcal{T}_E^1) \\
& = H^1(E, \O_E) \oplus H^0(E, \O_E)\\
&= k \oplus k,  
\end{eqnarray*}
as $\T_E$ is a free sheaf and 
\begin{eqnarray*}
Ext^2_{E \times E}(\O_E, \O_E) &= H^2(E, \O_E) \oplus H^1(E, \T_E) \oplus H^0(E, \T_E^2)\\
& = H^2(E, \O_E) \oplus H^1(E, \O_E) \oplus H^0(E, \wedge^2 \O_E) \\ 
&= H^1(E, \O_E) = k.
\end{eqnarray*}
 
Thus dimensions of the $Ext$ groups are dim $Ext^1_{E \times E}(\O_E, \O_E) = 2$ and dim $Ext^2_{E \times E}(\O_E, \O_E) = 1$. With this we have computed the deformation and obstruction spaces for $F_{coh}$, that is deformation as a derived autoequivalence, although we still have to show that the lifted structure sheaf of the graph does induce a derived autoequivalence. Indeed, let $\F_W$ be the lift of the structure sheaf of the graph $\O_{\Gamma(\sigma)}$ to $E_W \times E_W$, where $E_W$ is a lift of $E$ over $W$. Then  using Nakayama 's lemma, we note that $\Delta_*\O_{E_W} \ra \F_W \circ \F^{\vee}_W$ and $\F^{\vee}_W \circ \F_W \ra \Delta_*\O_{E_W}$ are quasi-isomorphism, where $\F_W \circ \F^{\vee}_W$ denotes the Fourier-Mukai kernel of the composition $\Phi_{\F_W} \circ \Phi_{\F^{\vee}_W}$, explicitly given by $p_{13*}(p_{12}^*(\F_W) \otimes p^*_{23}(\F_W^{\vee}[1]))$, for the case of elliptic curves. This argument also works for the higher dimensional Abelian varieties case, although the shift in that case is by $[g]$, where $g$ is the dimension of the Abelian variety.

\begin{proposition}
For an elliptic curve $E$ over an algebraically closed field $k$ of characteristic $p > 0$, any automorphism $\sigma: E \ra E$ lifts to a lift $E_K$ of $E$ over a field $K$ of characteristic zero, if and only if the Fourier-Mukai transform induced by the structure sheaf of the graph of $\sigma$, $\O_{\Gamma(\sigma)} \in D^b(E \times E)$, lifts as as an autoequivalence to $D^b(E_K) \ra D^b(E_K)$.   
\end{proposition}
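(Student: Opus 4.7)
The plan is to compare the two deformation functors $F_{aut}$ and $F_{coh}$ through the natural transformation $\eta$ and the long exact sequence of Proposition \ref{HartshorneEx19.1}. In the easy direction, if $\sigma$ lifts to $\sigma_R : E_R \to E_R$ for $R$ an Artin local $W(k)$-algebra, then $\eta_R(\sigma_R) = \O_{\Gamma(\sigma_R)}$ is a coherent sheaf on $E_R \times E_R$ deforming $\O_{\Gamma(\sigma)}$, and the argument sketched just before the proposition (Nakayama plus the quasi-isomorphism $\F_W \circ \F^{\vee}_W \simeq \Delta_* \O_{E_W}$) shows that this coherent lift actually defines a Fourier-Mukai autoequivalence. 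Taking the inverse limit over Artin quotients of $W(k)$ and invoking Grothendieck's existence theorem (whose obstructions for an ample line bundle on a curve lie in $H^2$ and vanish) algebraizes the formal lift to a lift of $\sigma$ over a characteristic-zero field $K$.

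For the converse, suppose $\O_{\Gamma(\sigma)}$ lifts as a Fourier-Mukai kernel, so we are given a deformation of it as a perfect complex on some $E_R \times E_R$. By the remark from \cite{tsrivast-1} cited after the proof of Theorem 3.4, any deformation of a coherent sheaf inside $D^b$ remains coherent, so the lift produces an honest element of $F_{coh}(R)$. Feeding this into Proposition \ref{HartshorneEx19.1} applied to $\Gamma(\sigma) \hookrightarrow E \times E$ and combining with the Ext computations already carried out yields
\[
0 \to k \to k^2 \to k \to k \to k \to \cdots .
\]
Exactness at $H^1(\O_E) = k$ forces the connecting map $H^1(\O_E) \to H^1(\N_E)$ to vanish, so the next map $H^1(\N_E) \to \mathrm{Ext}^2_{E \times E}(\O_E, \O_E)$ is injective. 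Because, by naturality, this injection sends the obstruction to lifting $\sigma$ as a morphism to the obstruction to lifting $\O_{\Gamma(\sigma)}$ as a sheaf, vanishing of the latter forces vanishing of the former.

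It remains to promote a sheaf lift $\F_R$ to an actual morphism lift. Take the scheme-theoretic support $Z_R \hookrightarrow E_R \times E_R$: flatness of $\F_R$ over $R$ makes $Z_R$ flat with special fiber $\Gamma(\sigma)$, and the first projection $p_1|_{Z_R}$ reduces to an isomorphism on the special fiber, hence is itself an isomorphism by the rigidity of isomorphisms under flat deformations. Composing with $p_2$ gives the desired $\sigma_R = p_2 \circ (p_1|_{Z_R})^{-1}$, and a further application of Grothendieck's algebraization yields a lift to $K$. The main obstacle I expect is pinning down that the LES connecting map is precisely the $\eta$-induced comparison of obstruction classes; once this naturality (which is essentially the content of \cite[Proposition 3.6]{tsrivast-1}) is granted, the rest of the argument amounts to the dimension count above together with the support trick.
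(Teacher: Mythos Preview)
Your argument is correct and follows the same core approach as the paper: plug the computed dimensions into the long exact sequence of Proposition~\ref{HartshorneEx19.1} and deduce that the map $H^1(\mathcal{N}_E)\to\mathrm{Ext}^2_{E\times E}(\O_E,\O_E)$ between obstruction spaces is injective. One small point of phrasing: exactness at $H^1(\O_E)$ alone does not force the connecting map to vanish; you need the dimension count at the previous term (the image of $k\hookrightarrow k^2$ is one-dimensional, so $\alpha$ has one-dimensional image and is therefore surjective onto $k$), which is what the paper makes explicit and what you implicitly invoke by ``dimension count.''

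Where you go beyond the paper's proof of this particular proposition is in the final paragraph: you add the scheme-theoretic support argument to promote a sheaf lift $\F_R$ to an honest morphism $\sigma_R$. The paper does not include this step in the elliptic-curve proof---it simply stops after establishing injectivity of $\gamma$---but it is exactly the argument the paper gives later for the higher-dimensional case (Theorem~\ref{main1}, via \cite[Lemma~3.5]{tsrivast-1}). So your proof is a more complete version of the paper's, anticipating the geometric step that the paper postpones; nothing in it is wrong or foreign to the paper's strategy.
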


\begin{proof}
From the long exact sequence \ref{les} above, putting in the computations of the groups from the preceding paragraph we get the exact sequence:
$$
0 \ra k \rightarrow k \oplus k \xrightarrow{\alpha} k \xrightarrow{\beta} k \xrightarrow{\gamma} k \ra \ldots, 
$$
where we note that $\alpha$ has to be a surjection (it cannot be zero, as the exactness would then imply that $k \oplus k \cong k$), thereby making $\beta$ a zero map and $\gamma$ an injection.  Thus, we get our result. 
\end{proof}

\subsubsection{The case of higher dimensional Abelian varieties}

In this case, just the computations of obstruction-deformation sequence will be insufficient to conclude that every automorphism lifts as a morphism if and only if it lifts as an autoequivalence. However, we show that is the case by observing that support of the lifted Fourier-Mukai kernel sheaf, is actually the graph of a lift of an automorphism, this argument works for any smooth projective variety, not just only for Abelian varieties, see remark \ref{CY} below. 

Let $A$ be an Abelian variety over an algebraically closed field of positive characteristic and let $\sigma: A \ra A$ be an automorphism of $A$, the definitions of the previous section can be transported directly over to the Abelian varieties from the case of elliptic curves in a straightforward manner.  

\begin{lemma}
The normal bundle $\mathcal{N}_{\Gamma(\sigma)}$ of $\Gamma(\sigma): A \ra A \times A$ is a free sheaf of rank $g$ on $A$, where $g$ is the dimension of $A$.  
\end{lemma}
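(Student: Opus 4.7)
The plan is to reduce the computation of $\mathcal{N}_{\Gamma(\sigma)}$ to the normal bundle of the diagonal $\Delta_A \subset A \times A$, which is well known to be $T_A$, and then use the fact that $T_A$ is free because $A$ is an Abelian variety.

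The key observation is that the graph $\Gamma(\sigma): A \hookrightarrow A \times A$, $a \mapsto (a, \sigma(a))$, is obtained from the diagonal $\Delta_A: A \hookrightarrow A \times A$ by applying the automorphism $\varphi := \mathrm{id}_A \times \sigma$ of $A \times A$; explicitly $\varphi \circ \Delta_A = \Gamma(\sigma)$. First I would argue that because $\varphi$ is an automorphism of the ambient scheme, it identifies the ideal sheaves via $\varphi^*\mathcal{I}_{\Gamma(\sigma)} = \mathcal{I}_{\Delta_A}$, and hence identifies the conormal (and thus normal) sheaves. Under the isomorphism $\varphi|_{\Delta_A}: \Delta_A \xrightarrow{\sim} \Gamma(\sigma)$, we get $\mathcal{N}_{\Gamma(\sigma)} \cong \mathcal{N}_{\Delta_A}$ as sheaves on $A$ (after the canonical identifications $\Delta_A \cong A \cong \Gamma(\sigma)$). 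The standard computation (e.g.\ \cite[Proposition II.8.12]{HartshorneAG}) gives $\mathcal{N}_{\Delta_A} \cong T_A$, and since $A$ is Abelian, $T_A$ is globally trivialised by translation-invariant vector fields, so $\mathcal{N}_{\Gamma(\sigma)} \cong \mathcal{O}_A^{g}$.

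Alternatively, and more concretely, I would use the short exact sequence
$$0 \to T_A \to \Gamma(\sigma)^* T_{A \times A} \to \mathcal{N}_{\Gamma(\sigma)} \to 0.$$
Here $\Gamma(\sigma)^* T_{A \times A} \cong \mathcal{O}_A^{2g}$ since $T_{A\times A}$ is free. Using the trivialisation by invariant vector fields and the structure of automorphisms of an Abelian variety (any $\sigma$ is the composition of a translation and a group homomorphism, so its differential is globally a constant invertible matrix $d\sigma$ in the trivialisation), the injection $T_A \hookrightarrow \Gamma(\sigma)^* T_{A\times A}$ is given globally by the constant block matrix $\begin{pmatrix} I_g \\ d\sigma \end{pmatrix}$. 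The quotient can be split off globally by projecting onto the second factor, yielding $\mathcal{N}_{\Gamma(\sigma)} \cong \mathcal{O}_A^{g}$.

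I do not anticipate any real obstacle: the essential inputs are just the triviality of $T_A$ and the identification of the normal bundle of the diagonal, both of which are standard. The only mild subtlety worth being explicit about is that the identification $\mathcal{N}_{\Gamma(\sigma)} \cong \mathcal{N}_{\Delta_A}$ uses the automorphism $\varphi = \mathrm{id}_A \times \sigma$ of the ambient variety; this step would be the one most worth writing out carefully to avoid any confusion about how the source $A$ is being identified on the two sides.
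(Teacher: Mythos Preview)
Your proposal is correct. Your second approach --- the tangent--normal exact sequence $0 \to T_A \to \Gamma(\sigma)^* T_{A\times A} \to \mathcal{N}_{\Gamma(\sigma)} \to 0$ together with the global trivialisations of the tangent bundles --- is essentially the paper's argument: the paper writes this sequence and compares it, via the trivialisations $T_A \cong T_{A,e}\otimes\mathcal{O}_A$ and $T_{A\times A}\cong T_{A\times A,(e,e)}\otimes\mathcal{O}_{A\times A}$, to the same sequence at the identity point, concluding that the cokernel is $N_{A,e}\otimes\mathcal{O}_A$ and hence free of rank $g$. Your addition of the explicit block matrix $\begin{pmatrix} I_g \\ d\sigma \end{pmatrix}$ and the splitting is more than the paper writes, but the underlying idea is the same.

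Your first approach, by contrast, is a genuinely different and somewhat cleaner route: instead of trivialising the exact sequence directly, you transport the problem along the ambient automorphism $\varphi = \mathrm{id}_A \times \sigma$ to reduce to the diagonal, and then invoke the standard identification $\mathcal{N}_{\Delta_A}\cong T_A$. This buys you a one-line reduction and avoids having to think about how $d\sigma$ enters; the paper's diagrammatic argument, on the other hand, works uniformly for any closed immersion of $A$ into an Abelian variety with trivial tangent bundle without needing an ambient automorphism carrying it to the diagonal. Either is perfectly adequate here.
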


\begin{proof}
This follows from the following maps of the short exact sequences (definition of normal sheaf, \cite[Chapter II.8 Page 182]{HartshorneAG})
\begin{equation*}
\xymatrix{ 
0 \ar[r] &T_{A/k} \ar[r] & T_{A \times A/k} \otimes \O_{A} \ar[r] & \mathcal{N}_{A / A \times A} \ar[r] &0 \\
0 \ar[r] &T_{A,e} \otimes \O_A \ar[r] \ar[u]^{\cong} & (T_{A \times A, (e,e)} \otimes \O_{A \times A}) \otimes \O_{A} \ar[r] \ar[u]^{\cong} & N_{A,e} \otimes \O_A \ar[r] \ar[u]^{\cong}  &0,}
\end{equation*}
where the last isomorphism is induced by the first two. 
\end{proof}

Thus, we can compute the tangent and deformation spaces for $F_{aut}$ as $H^0(\N_A)=k^g$ and $H^1(\N_A)= k^{g^2}$.

\begin{lemma}
The  groups $Ext$ for Abelian variety $A$ in the exact sequence \ref{les} can be computed as follows in case the characteristic of the base field $k$ is greater than the dimension of $A$:
$$
Ext^1(\O_A, \O_A) = k^{2g}
$$
and
$$ 
Ext^2(\O_A, \O_A) = k^{2g^2 - g}.
$$
\end{lemma}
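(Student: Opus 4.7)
The plan is to reduce the computation to Hodge groups via the Hochschild-Kostant-Rosenberg (HKR) decomposition, exactly as in the elliptic curve case treated earlier, and then exploit the triviality of the tangent bundle of an abelian variety. First, I would identify $\text{Ext}^i_{A \times A}(\O_A, \O_A)$ with the Hochschild cohomology $HH^i(A)$ and apply HKR to obtain
$$HH^i(A) \;\cong\; \bigoplus_{p+q=i} H^p(A, \wedge^q \T_A).$$
This is the step where the hypothesis $\operatorname{char} k > \dim A$ enters: in positive characteristic the HKR decomposition is known to hold under such a bound (Yekutieli, C\u{a}ld\u{a}raru, and others), while it can genuinely fail without it.

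The rest is then a bookkeeping exercise. Since $\T_A$ is globally trivial of rank $g$, each $\wedge^q \T_A$ is free of rank $\binom{g}{q}$, and by the description of the Hodge groups of abelian varieties recalled in Section 1 we have $\dim_k H^p(A, \O_A) = \binom{g}{p}$. Combining these gives
$$\dim_k \text{Ext}^i_{A \times A}(\O_A, \O_A) \;=\; \sum_{p+q=i} \binom{g}{p}\binom{g}{q}.$$
Setting $i=1$ yields $\binom{g}{0}\binom{g}{1}+\binom{g}{1}\binom{g}{0} = 2g$, and $i=2$ yields $2\binom{g}{2}+g^2 = g(g-1)+g^2 = 2g^2-g$, as claimed.

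The main obstacle is really the first step: justifying the HKR decomposition in positive characteristic, which is precisely why the hypothesis $\operatorname{char} k > g$ has been imposed. Once HKR is granted the computation is formal, and the identification $\text{Ext}^i_{A \times A}(\O_{\Gamma(\sigma)}, \O_{\Gamma(\sigma)}) \cong \text{Ext}^i_{A \times A}(\O_A, \O_A)$ induced by the automorphism $\sigma$ (the analogue of the identification used in the elliptic case) transports these numbers directly into the deformation-obstruction sequence for the Fourier-Mukai kernel $\O_{\Gamma(\sigma)}$, which is the form in which they will be used in the main argument.
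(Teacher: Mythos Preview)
Your proof is correct and follows essentially the same route as the paper: identify $\text{Ext}^i_{A\times A}(\O_A,\O_A)$ with Hochschild cohomology, apply the HKR decomposition (this is exactly where the hypothesis $\operatorname{char} k > \dim A$ is invoked, and the paper cites \cite{HKR} for it), and then compute each summand using the triviality of $\T_A$ and the known Hodge numbers of an abelian variety. Your write-up is in fact a bit more explicit than the paper's, spelling out the general binomial formula before specializing to $i=1,2$, but the argument is the same.
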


\begin{proof}
We will again use HKR isomorphism \cite{HKR} and hence the condition on characteristic and dimension. 
\begin{eqnarray*}
Ext^1(\O_A, \O_A) = HH^1(A) & = H^1(A, \O_A) \oplus H^0(A, \T_A) \\
& = k^g \oplus k^g 
\end{eqnarray*}
and 
\begin{eqnarray*}
Ext^2(\O_A, \O_A) = HH^2(A) & = H^2(A, \O_A) \oplus H^0(A, \T^2_A) \oplus H^1(X, \T_A) \\
& = k^{g(g-1)/2} \oplus k^{g(g-1)/2} \oplus k^{g^2}.  
\end{eqnarray*}
\end{proof}

Thus the deformation obstruction long exact sequence \ref{les} becomes
$$
0 \ra k^g \ra k^{2g} \xrightarrow{\alpha} k^g \ra k^{2g} \xrightarrow{\gamma} k^{2g^2-g} \ra \ldots.
$$
Note that now $\alpha$ does not have to be surjective to be non-zero.  Thus we cannot say that $\gamma$ is injective. So we need a geometric argument. 

\begin{theorem} \label{main1}
For any Abelian variety $A$ over an algebraically closed field $k$ of characteristic $p > 0$, any automorphism $\sigma: A \ra A$ lifts to a lift $A_K$ of $E$ over a field $K$ of characteristic zero, if and only if the Fourier-Mukai transform induced by the structure sheaf of the graph of $\sigma$, $\O_{\Gamma(\sigma)} \in D^b(A \times A)$, lifts as as an autoequivalence to $D^b(A_K) \ra D^b(A_K)$.  
\end{theorem}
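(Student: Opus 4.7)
The proof breaks into two directions. The \emph{only if} direction is routine: given a lift $\sigma_R : A_R \to A_R$, the structure sheaf $\O_{\Gamma(\sigma_R)}$ of its graph on $A_R \times A_R$ is a coherent-sheaf deformation of $\O_{\Gamma(\sigma)}$, and the Nakayama argument sketched in the paragraph preceding the elliptic-curve proposition, now applied with the shift by $[g]$, shows that $\Phi_{\O_{\Gamma(\sigma_R)}}$ is an autoequivalence.

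For the \emph{if} direction, the plan is the geometric one outlined in the introduction. Working over an Artin local $W(k)$-algebra $R$ with residue field $k$, take as input a coherent sheaf $\F_R$ on $A_R \times A_R$, flat over $R$, that deforms $\O_{\Gamma(\sigma)}$. The first step is to extract a scheme-theoretic support $Z_R \hookrightarrow A_R \times A_R$. Since $\O_{\Gamma(\sigma)}$ is locally cyclic over $\O_{A \times A}$, Nakayama's lemma promotes any local lift of a generator to a generator of $\F_R$, so $\F_R$ is locally cyclic; taking the annihilator ideal globally defines a closed subscheme $Z_R \subset A_R \times A_R$, and $\F_R$ is a line bundle on $Z_R$ reducing to $\O_{\Gamma(\sigma)}$. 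Flatness of $\F_R$ over $R$ gives flatness of $Z_R$ over $R$, so $Z_R$ is an embedded flat deformation of $\Gamma(\sigma)$. Only the underlying subscheme enters the next step; the line-bundle degree of freedom corresponds to the $H^1(\O_X)$ summand in the decomposition of $\mathrm{Ext}^1_Y(\O_X, \O_X)$ coming from the long exact sequence.

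The second step recovers the automorphism from $Z_R$. Both projections $p_i : Z_R \to A_R$ are proper morphisms between $R$-flat schemes whose reductions modulo $\m_R$ are the isomorphisms $\Gamma(\sigma) \xrightarrow{\sim} A$. A standard infinitesimal argument --- lift the inverse of each $p_i$ locally, check that the two composites are the identity modulo $\m_R$, and invoke Nakayama --- then upgrades each $p_i$ to an isomorphism. Hence $\sigma_R := p_2 \circ p_1^{-1}$ is the desired lift, and $Z_R = \Gamma(\sigma_R)$.

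Finally, to reach the characteristic-zero statement, a compatible system $\{\F_n\}$ over $\{W_n(k)\}$ yields a compatible system $\{\sigma_n\}$ and hence a formal lift; together with a parallel lift of a polarization on $A$, Grothendieck's existence theorem algebraizes the pair $(A_K, \sigma_K)$ as in Section 1. The main obstacle is the first step of the argument above: the dimension count preceding the statement shows that the map $\gamma: H^1(\N_X) \to \mathrm{Ext}^2_Y(\O_X, \O_X)$ need not be injective for $g \geq 2$, so the purely cohomological argument available for elliptic curves (where injectivity of $\gamma$ was forced by the Euler characteristic) is unavailable, and one must genuinely use that a coherent-sheaf deformation of a subscheme's structure sheaf is, up to a line-bundle twist, a scheme deformation of that subscheme.
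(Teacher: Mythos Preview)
Your proof is correct and follows the same geometric strategy as the paper: extract the scheme-theoretic support of the deformed sheaf and show, via the two projections, that it is the graph of a lifted automorphism. The paper's own argument is terser, outsourcing the key step to \cite[Lemma 3.5]{tsrivast-1} together with the claim that the support of a deformed coherent sheaf deforms the original support, whereas you unpack these by hand (the locally-cyclic / Nakayama step and the infinitesimal isomorphism criterion for the projections).
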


\begin{proof}
Since deformation of a morphism as an autoequivalence is just the deformation of the structure sheaf of the graph as a coherent sheaf in the derived category, to prove the above statement we need to show that the support of the lifted coherent sheaf actually gives us a lift of the automorphism. This follows easily from \cite[Lemma 3.5]{tsrivast-1} and the fact that given a coherent sheaf $\F$ on a smooth projective variety, the support of the lifted coherent sheaf $\F_W$ gives a deformation of the support  of $\F$. 
\end{proof}

\begin{remark} \label{CY}
Note that the above argument also works in the case of any smooth projective variety admitting a lifting to characteristic zero. 
\end{remark}

\subsection{Are there extra lifts of automorphisms as autoequivalences?}
First, let us remark that for the case of Abelian varieties, we have the following exact sequence:

\begin{lemma} For any Artin local $W$-algebra $R$ with residue field $k$, any Abelian scheme $A$ over $Spec (R) =S$ and for any surjection $R \twoheadrightarrow R'$ of local Artin $W$-algebras such that $A' = A \otimes_R R'$,  there is an exact sequence:    
$$
0 \ra Hom_S(A, A) \ra Hom_S'(A', A').
$$
\end{lemma}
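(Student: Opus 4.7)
My plan is to exploit the classical fact that for Abelian schemes $A, B$ over a base $S$, the functor $\underline{Hom}^{gp}_S(A, B)$ of homomorphisms is representable by a formally unramified $S$-scheme (in fact a disjoint union of finite étale pieces indexed by discrete invariants such as polarization degree; see Mumford's GIT or Faltings--Chai, Chapter I). Given this, the lemma follows immediately.

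Indeed, since both $R$ and $R'$ are Artin local with the same residue field $k$, the kernel of $R \twoheadrightarrow R'$ lies in $\m_R$ and is therefore nilpotent, so $Spec(R') \hookrightarrow Spec(R)$ is a nilpotent thickening of $S$-schemes. Formal unramifiedness of $\underline{Hom}^{gp}(A, A)$ translates precisely to injectivity of the point map for any such nilpotent thickening:
$$\underline{Hom}^{gp}(A,A)(Spec(R)) \hookrightarrow \underline{Hom}^{gp}(A,A)(Spec(R')),$$
which is exactly the claim $Hom_S(A, A) \hookrightarrow Hom_{S'}(A', A')$.

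Alternatively, a more hands-on proof proceeds by dévissage: reduce to a small square-zero extension $R' = R/I$ with $I\m_R = 0$, and given the difference $\eta = \phi - \psi$ of two homomorphisms with equal restriction to $A'$, analyze the graph $\Gamma_\eta \subset A \times A$ as a closed subgroup scheme lifting $A' \times \{o_{A'}\}$. Treating $\eta$ merely as an $S$-morphism, its lifts of the zero morphism are a torsor over $H^0(A', 0^{*}T_{A/S} \otimes I) \cong Lie(A) \otimes_R I$ (using triviality of $T_{A/S}$), and the homomorphism condition $\eta(o_A)=o_A$ should cut this torsor down to the single element $\eta = 0$. The main obstacle is to make the deformation torsor identification explicit enough to see that the origin condition forces the torsor parameter to vanish; this rigidity for subgroup schemes of Abelian schemes is essentially equivalent to the unramifiedness cited above, so the slick high-level approach is preferable.
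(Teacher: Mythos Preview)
Your argument is correct and is essentially the same as the paper's: both invoke the rigidity result from Mumford's GIT (the paper cites Corollary 6.2 explicitly) that makes $\underline{Hom}^{gp}(A,A)$ unramified over the base. The only cosmetic difference is that the paper phrases this as injectivity of $Hom_S(A,A)$ and $Hom_{S'}(A',A')$ into the common special fiber $Hom_k(A_0,A_0)$ via a commutative triangle, whereas you invoke formal unramifiedness directly on the nilpotent thickening $Spec(R') \hookrightarrow Spec(R)$.
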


\begin{proof}
This follows from \cite[Corollary 6.2]{GIT} and 
\[
\xymatrix{
Hom_S(A,A) \ar[rd] \ar[dd] & \\
& Hom_k(A_0, A_0) \\
Hom_{S'}(A', A'). \ar[ur]}
\]
\end{proof}

This implies that $F_{aut}$ is actually a subfunctor of $F_{A_0}$, deformations of $A_0$ as a scheme, and  lift of every automorphism is unique upto lift of the base scheme.

On the other hand, one can easily see that this is not true for the deformation functor of coherent sheaves, i.e., given a fixed lift of a base Abelian scheme as a scheme, the lift of a coherent sheaf to the fixed lift will not be unique. It will be  a torsor under the $Ext^1$ group, and for Abelian variety, this group contains the group $H^1(A, \O_A)$ which is a $g$-dimensional group.  

\begin{remark} \label{subfunctor}
This is the reason that for construct algebraizable lifts of Abelian varieties, we do not just lift an line bundle but instead we choose to lift a polarization, which is actually a morphism of the Abelian variety to its dual variety. Thus using again \cite[Corollary 6.2]{GIT}, we get that the deformation functor for polarized Abelian varieties is a subfunctor of deformation functor for Abelian varieties.     
\end{remark}

Thus for Abelian varieties, the answer to the question posed in the heading is yes. There will be extra lifts of automorphism as autoequivalences. Thus, we have

\begin{proposition} \label{main2}
The  extra lifts of the automorphism as an autoequivalence are given by composition of the lifted automorphism with twist by the lift of the structure sheaf of the graph.
\end{proposition}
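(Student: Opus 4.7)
The plan is to analyze, for a fixed Artinian base $R$, the fibers of the natural transformation $\eta: F_{aut} \to F_{coh}$, and show that the fiber over a given lifted automorphism $\sigma_R$ is naturally a torsor under the deformation space of the trivial line bundle, so that the extra derived lifts come precisely from twisting the Fourier--Mukai kernel $\O_{\Gamma(\sigma_R)}$ by line bundles restricting to $\O_A$ on the special fiber.

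First I would take any element $\F_R \in F_{coh}(R)$. By (the proof of) Theorem \ref{main1} together with the flatness of $\F_R$ over $R$, the scheme-theoretic support of $\F_R$ is a flat deformation of $\Gamma(\sigma)$ inside $A_R \times A_R$, and this deformation is necessarily of the form $\Gamma(\sigma_R)$ for some lift $(A_R,\sigma_R)\in F_{aut}(R)$. Thus every object of $F_{coh}(R)$ lies in the $\eta_R$-fiber over some $(A_R,\sigma_R)$, and we are reduced to understanding this fiber.

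Second, I would restrict attention to a fixed $\sigma_R$. Any $\F_R$ supported on $\Gamma(\sigma_R)$ and restricting to $\O_{\Gamma(\sigma)}$ on the special fiber is, by flatness and rank considerations, the pushforward of a line bundle $L_R$ on $\Gamma(\sigma_R) \cong A_R$ with $L_R|_A \cong \O_A$. The set of such $L_R$ is exactly the kernel of the restriction map $\mathrm{Pic}(A_R)\to \mathrm{Pic}(A)$, which by standard deformation theory is a torsor under $H^1(A,\O_A)\cong k^g$. This matches the gap of $g$ dimensions between $H^0(\N_A)=k^g$ and $\mathrm{Ext}^1(\O_A,\O_A)=k^{2g}$ computed in the previous section, confirming that the fiber of $\eta_R$ accounts for precisely the extra deformation parameters.

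Third, I would translate this back to the Fourier--Mukai picture. Because convolution with $\O_{\Gamma(\sigma_R)}$ is the functor $\sigma_{R*}$, convolution with $(\Gamma(\sigma_R))_* L_R$ is the composition $\sigma_{R*}\circ (-\otimes L_R)$ (or, equivalently, $(-\otimes L'_R)\circ \sigma_{R*}$ after transporting $L_R$ along $\sigma_R$). This exhibits the extra autoequivalence lifts as the composition of the lifted morphism $\sigma_R$ with twists by line bundles $L_R$ lifting the trivial line bundle on the graph, as claimed. The main obstacle is really just step two: verifying that a coherent deformation of $\O_{\Gamma(\sigma)}$ with support exactly $\Gamma(\sigma_R)$ is forced to be a line bundle on $\Gamma(\sigma_R)$, which follows from flatness over $R$ combined with Nakayama applied fiberwise, but it is the place where integrality of the graph and invertibility of the special fiber must be used carefully.
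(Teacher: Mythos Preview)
Your argument is correct and follows the same line as the paper's own (very brief) justification: the paper notes before the proposition that lifts of $\sigma$ as a morphism are unique for a fixed lift of $A$ while lifts of $\O_{\Gamma(\sigma)}$ as a sheaf form a torsor under a group containing $H^1(A,\O_A)$, and immediately after the proposition observes that any lift of $\O_{\Gamma(\sigma)}$ is a line bundle $\mathcal{L}_W$ on $\Gamma(\sigma_W)$ restricting to $\O_{\Gamma(\sigma)}$. Your three steps simply make these points precise; the only minor slip is describing the kernel of $\mathrm{Pic}(A_R)\to\mathrm{Pic}(A)$ as ``a torsor under $H^1(A,\O_A)$'' (it is a group, and equals $H^1(A,\O_A)$ only for first-order $R$), but this does not affect the argument.
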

Note that a lift of the structure sheaf of the graph of $\sigma: A \ra A$ will be just a line bundle $\L_W$ on the graph of the lifted automorphism $\sigma_W: A_W \ra A_W$ which reduces to the structure sheaf of the graph $\O_{\Gamma(\sigma)}$. 

\subsubsection{Comparison with the deformations of the induced automorphism on product $A \times \hat{A}$} Recall from Theorem \ref{derivedAV}, that to every derived equivalence $\Phi_{\F}:D^(A) \ra D^b(A)$ one can associate an isometric automorphism $f_{\F}$ of $A \times \hat{A}$ (in case of elliptic curve $E$ an automorphism of $E \times E$). This association gives a corresponding transformation on the level of deformation functors, i.e., a natural transformation between the deformation functors $F_{coh, \F} \ra F_{aut, f_{\F}}$, where $\F$ is a (shifted) coherent sheaf \cite[Proposition 9.53]{HuyFM}. Note that the association of $\F$ with $f_{\F}$ was \textbf{not} a 1-1 correspondence, therefore the natural transformation is not injective at the level of tangent spaces of $F_{coh, \F}$ and $F_{aut, f_{\F}}$. This fits well with the discrepancy in the number of lifts  of $\F$ as an sheaf (which is not unique) and $f_{\F}$, which is unique for a chosen lift of the base Abelian variety as its deformation functor is a subfunctor for the deformation functor of Abelian varieties as schemes.



\end{document}